\def\vph{\varphi}
\def\C{\mathcal{C}}
\def\Abad{A_{\mathrm{\footnotesize{bad}}}}
\def\dist{{\textrm{dist}}}
\def\Xlow{X_{\mathrm{low}}}
\def\Xhigh{X_{\mathrm{high}}}
\def\Ylow{Y_{\mathrm{low}}}
\def\Yhigh{Y_{\mathrm{high}}}
\newtheorem*{mainthm}{Main Theorem}
\newtheorem{lem}{Lemma}
\newtheorem{conj}{Conjecture}
\newcommand{\aside}[1]{\marginnote{\scriptsize{#1}}[0cm]}
\newcommand{\aaside}[2]{\marginnote{\scriptsize{#1}}[#2]}
\newcommand\Emph[1]{\emph{#1}\aside{#1}}
\author{Daniel W. Cranston\thanks{%
Department of Computer Science, Virginia Commonwealth
University, Richmond, VA, USA;
\texttt{dcranston@vcu.edu}
}}
\begin{document}
\title{Strong Edge-Coloring of Cubic\\ Bipartite Graphs: A Counterexample}
\maketitle
\abstract{A strong edge-coloring $\vph$ of a graph $G$ assigns colors to edges of
$G$ such that $\vph(e_1)\ne \vph(e_2)$ whenever $e_1$ and $e_2$ are at distance
no more than 1.  It is equivalent to a proper vertex coloring of the square of the line
graph of $G$.  In 1990 Faudree, Schelp, Gy\'arf\'as, and Tuza conjectured that if
$G$ is a bipartite graph with maximum degree 3 and sufficiently large girth,
then $G$ has a strong edge-coloring with at most 5 colors.  
In 2021 this conjecture was disproved by Lu\v{z}ar, Ma\v{c}ajov\'{a},
\v{S}koviera, and Sot\'{a}k.  Here we give an alternative construction to
disprove the conjecture.
}
\bigskip

\section{Introduction}
A \Emph{strong edge-coloring} $\vph$ of a graph $G$ assigns colors to the edges
of $G$ such that $\vph(e_1)\ne\vph(e_2)$ whenever $e_1$ and $e_2$ are at distance
no more than 1.  (This is equivalent to a proper vertex coloring of the square
of the line graph.)  The \emph{strong chromatic index} of $G$, denoted
\Emph{$\chi'_s(G)$} is the smallest number of colors that admits a strong
edge-coloring.  This notion was introduced in 1983 by Fouquet and
Jolivet~\cite{FJ2,FJ1}.  In 1985 Erd\H{o}s and Ne\v{s}et\v{r}il conjectured,
for every graph $G$ with maximum degree $\Delta$,
that $\chi'_s(G)\le \frac54\Delta^2$ and that the lower order terms can be
improved slightly when $\Delta$ is odd.  This problem has spurred much work in
the area, and
Deng, Yu, and Zhou~\cite{DYZ-survey} survey results
through 2019.  In this note we focus on a conjecture from 1990 of
Faudree, Schelp, Gy\'arf\'as, and Tuza~\cite{FSGT}.
\begin{conj}[\cite{FSGT}]
\label{conj1}
Let $G$ be a graph with $\Delta(G)=3$.
\begin{enumerate}
\item[(1)] Now $\chi'_s(G)\le 10$.
\item[(2)] If $G$ is bipartite, then $\chi'_s(G)\le 9$.
\item[(3)] If $G$ is planar, then $\chi'_s(G)\le 9$.
\item[(4)] If $G$ is bipartite and for each edge $xy\in E(G)$ we have $d(x)+d(y)\le
5$, then $\chi'_s(G)\le 6$.
\item[(5)] If $G$ is bipartite and has no 4-cycle, then $\chi'_s(G)\le 7$.
\item[(6)] If $G$ is bipartite and its girth is large, then $\chi'_s(G)\le 5$.
\end{enumerate}
\end{conj}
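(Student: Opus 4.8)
Since part~(6) of Conjecture~\ref{conj1} is the assertion to be refuted, I aim to prove its negation: for every $g$ there is a bipartite graph $G$ with $\Delta(G)=3$ and girth at least $g$ for which $\chi'_s(G)\ge 6$. I take $G$ to be $3$-regular, so that $\Delta(G)=3$ holds automatically. The plan is to reduce the existence of a strong $5$-edge-coloring of a cubic graph to a covering condition over the Petersen graph, and then to exploit a divisibility obstruction that persists as the girth grows.

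First I would record the elementary bound $\chi'_s(G)\ge 5$ for every cubic $G$: for any edge $uv$ the five edges meeting $\{u,v\}$ pairwise lie at distance at most $1$, hence form a clique in the square of the line graph. The heart of the argument is the characterization of equality. Suppose $\vph$ uses exactly the colors $\{1,\dots,5\}$. For each vertex $u$ let $T(u)$ be the set of colors on the three edges at $u$; these edges are mutually adjacent, so $|T(u)|=3$, and I write $M(u)=\{1,\dots,5\}\setminus T(u)$ for the missing pair. Two facts are forced. If $u\sim v$ then $|T(u)\cap T(v)|=1$: the intersection contains $\vph(uv)$, while a second common color would force an edge at $u$ and an edge at $v$ lying at distance $1$ to share that color. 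Hence $M(u)\cap M(v)=\emptyset$. Moreover the three neighbors $v_1,v_2,v_3$ of $u$ satisfy $M(v_i)=T(u)\setminus\{\vph(uv_i)\}$, and since the $\vph(uv_i)$ are distinct, $M(v_1),M(v_2),M(v_3)$ are the three distinct $2$-element subsets of $T(u)$. Thus $M$ is a homomorphism from $G$ to the Petersen graph $P$ (realized as the Kneser graph on $2$-element subsets of $\{1,\dots,5\}$ with disjointness as adjacency) that is \emph{bijective} on every neighborhood. Conversely, any such locally bijective homomorphism yields a strong $5$-edge-coloring by setting $\vph(uv)$ to be the unique color outside $M(u)\cup M(v)$, the distance-$1$ condition being automatic from disjointness. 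A locally bijective homomorphism between connected graphs is exactly a covering map, so I obtain the lemma: a connected cubic graph $G$ satisfies $\chi'_s(G)=5$ if and only if $G$ covers $P$.

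This lemma supplies the obstruction for free. A connected $k$-fold cover of $P$ has exactly $10k$ vertices, so a connected cubic graph whose order is not divisible by $10$ cannot cover $P$; with $\chi'_s\ge 5$ this gives $\chi'_s(G)\ge 6$. It therefore suffices to produce, for each $g$, a connected bipartite cubic graph of girth at least $g$ whose order is not divisible by $10$. I start from any connected bipartite cubic graph $H$ of girth at least $g$ (these are classical). If $|V(H)|$ is not divisible by $10$ I am done; otherwise I apply one girth-preserving splice that adds two vertices. Concretely I delete two edges $a_1b_1,a_2b_2$ whose four endpoints are pairwise at distance at least $g$ in $H$, introduce two new vertices $c,d$, and add $da_1,da_2,cb_1,cb_2,cd$, placing $c$ on the side of the $a_i$ and $d$ on the side of the $b_i$ so the graph stays bipartite and cubic. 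Any new cycle must traverse an old path between two of the far-apart attachment points, so the girth remains at least $g$, while the order increases by $2$ and is no longer divisible by $10$. The resulting $G$ is a bipartite cubic graph of girth at least $g$ with $\chi'_s(G)\ge 6$, contradicting part~(6).

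The main obstacle is establishing the lemma's forward direction with full rigor: one must confirm that \emph{every} strong $5$-edge-coloring, not merely a normalized one, is forced into the Petersen-cover pattern, which amounts to pinning down the two forced facts above and checking that the local analysis needs no girth hypothesis at all (this girth-independence is exactly what makes the reduction clean). A secondary point, easy to underestimate, is the splice: I must verify that two edges with pairwise-distant endpoints can be chosen—guaranteed by the Moore bound once $g$ is large—and that the gadget creates no short cycle, so that the girth is genuinely preserved. Once these are in place, the single numerical obstruction ``order not divisible by $10$'' does all the remaining work, and the family of spliced high-girth graphs refutes the conjectured bound of $5$.
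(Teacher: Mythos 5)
Your proposal is essentially correct, but it follows a genuinely different route from the paper --- in fact it reconstructs the approach of Lu\v{z}ar, Ma\v{c}ajov\'{a}, \v{S}koviera, and Sot\'{a}k~\cite{LMSS}, the very argument to which this note offers an alternative. Your key lemma --- a connected cubic graph has $\chi'_s=5$ if and only if it covers the Petersen graph, proved via the missing-pair map $M$ into the Kneser graph $K(5,2)$ --- is their characterization, and your obstruction is divisibility of $|V(G)|$ by $10$. The paper avoids covering theory entirely: its Lemma~\ref{lem1} is a short averaging argument showing that in any strong edge-coloring of a $k$-regular simple graph each color class has at most $|E(G)|/(2k-1)$ edges, so $(2k-1)\nmid|E(G)|$ forces $\chi'_s(G)\ge 2k$; its Lemma~\ref{lem2} (a bipartite adaptation of Erd\H{o}s--Sachs) supplies $k$-regular bipartite graphs of girth at least $g$ on $2n$ vertices for \emph{every} sufficiently large $n$, so one simply picks $n$ with $(2k-1)\nmid n$, making $|E(G)|=kn$ miss divisibility by $2k-1$. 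Your route buys more structure (an exact characterization of the cubic graphs attaining $\chi'_s=5$, with the vertex-count obstruction falling out as a corollary), but it costs the covering machinery plus a splice gadget to fix the order modulo $10$, and as written it is specific to $k=3$; the paper's route is more elementary, needs no gadget (the choice of $n$ is free), and handles all $k\ge 2$ uniformly, with the obstruction living on edges modulo $2k-1$ rather than on vertices modulo $10$.

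Two repairs your write-up needs, both local. First, the four endpoints of your spliced edges cannot be ``pairwise at distance at least $g$,'' since $a_i$ is adjacent to $b_i$; the hypothesis you want is $\dist(\{a_1,b_1\},\{a_2,b_2\})\ge g$, and then the one case your stated condition was meant to kill --- a short cycle through $c$ and $d$ that returns to the \emph{same} deleted edge --- is instead excluded by the girth of $H$: after deleting $a_ib_i$, every $a_i,b_i$-path in $H$ has length at least $g-1$, so such a cycle has length at least $g+2$. Second, your covering lemma and the ``$10k$ vertices'' count are stated for connected graphs, so you should observe that the splice preserves connectivity (any path using a deleted edge $a_ib_i$ reroutes through the new vertices) and that the classical graph $H$ can be taken large enough that two edges at distance at least $g$ exist; both are easy, but they are needed for the divisibility obstruction to apply.
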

Four parts of this conjecture have been confirmed.  In the early 1990s
Andersen~\cite{Andersen} and Hor\'{a}k,
Qing, and Trotter~\cite{HQT} proved (1).  In 1993 Steger and Yu~\cite{SY} proved (2).
In 2016 Kostochka, Li, Ruksasakchai, Santana, Wang, and Yu~\cite{KLRSWY} proved (3). And
in 2008 Wu and Lin~\cite{WL} proved (4).  As far as we know, (5) remains open.  
In 2021 (6) was disproved by Lu\v{z}ar, Ma\v{c}ajov\'{a}, \v{S}koviera, and
Sot\'{a}k~\cite{LMSS}. 
Here we give an alternate (and, arguably, simpler) construction to disprove (6).

\section{Main Result}
Our Main Theorem is motivated by the special case of $k$-regular graphs where $k=3$, which is all that is
needed to disprove Conjecture~\ref{conj1}(6).  However, with only a bit more work we prove the
result for all $k\ge 2$. 
\begin{mainthm}
For every positive integer $g$ and every integer $k\ge 2$, there exists a $k$-regular bipartite graph $G$
such that $G$ has girth at least $g$ and $\chi'_s(G)\ge 2k$.
\end{mainthm}

We first prove the Main Theorem assuming two lemmas.  We prove the lemmas below.
\begin{proof}
Fix positive integers $g$ and $k\ge 2$.  By Lemma~\ref{lem2}, if $n$ is sufficiently large
then there exists a bipartite $k$-regular graph on $2n$ vertices with
girth at least $g$.  We choose such $n$ that is not divisible by $2k-1$.
Since $G$ is $k$-regular, $|E(G)|=\frac{k}2|V(G)| = kn$.  Since 
$(2k-1)\nmid n$, and $k$ is relatively prime to $2k-1$, also $(2k-1)\nmid\vert
E(G)\vert$.  Thus, Lemma~\ref{lem1} implies that $\chi'_s(G)\ge 2k$.
\end{proof}

We consider an arbitrary edge $e$ in a $k$-regular graph and the $2k-2$
edges that share one endpoint with $e$; in the square of the line graph, the 
corresponding vertices form a clique. So each color in a strong edge-coloring of
$G$ is used on at most one of these $2k-1$ edges.  By repeating
this argument for every edge $e$, and averaging, we deduce that every color in 
a strong edge-coloring is used on at most $1/(2k-1)$ of all edges.  We
formalize this idea below.

\begin{lem}
\label{lem1}
If $G$ is $k$-regular and simple, for some $k\ge 2$, then in every strong
edge-coloring $\vph$ of $G$ every color
class of $\vph$ has size at most $|E(G)|/(2k-1)$.  In particular, if
$(2k-1)\nmid\left|E(G)\right|$, then $\chi'_s(G)\ge 2k$.
\end{lem}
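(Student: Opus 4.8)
The plan is to fix a strong edge-coloring $\vph$ and a single color class $C$, and to prove $(2k-1)|C|\le |E(G)|$ by a double-counting argument; the ``in particular'' clause then follows by a short divisibility step. The one combinatorial fact I would isolate first is this: for each edge $e=uv$, let $S_e$ consist of $e$ together with the $2k-2$ edges that meet $e$ at $u$ or at $v$. Because $G$ is $k$-regular there are $k-1$ such edges at each of $u$ and $v$, and because $G$ is simple these two groups are disjoint (the only edge through both $u$ and $v$ is $e$ itself), so $|S_e|=2k-1$ exactly. Moreover any two edges of $S_e$ lie at distance at most $1$: two that share $u$, or share $v$, are adjacent, while an edge at $u$ and an edge at $v$ are joined through $e$. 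Hence $\vph$ assigns the members of $S_e$ exactly $2k-1$ distinct colors, so in particular $|S_e\cap C|\le 1$ for every edge $e$.

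I would then count the pairs $(e,f)$ with $e\in C$ and $f\in S_e$. Summing over $e\in C$ gives exactly $(2k-1)|C|$ such pairs, since $|S_e|=2k-1$. The goal is to show this quantity is at most $|E(G)|$, for which it suffices to prove that every edge $f$ of $G$ belongs to $S_e$ for \emph{at most one} $e\in C$.

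This last claim is the heart of the argument, and I would split it into two cases. If $f\in C$, then the only candidate is $e=f$: any other $e\in C$ with $f\in S_e$ would be adjacent to $f$, giving two edges of $C$ that share a vertex and contradicting that $C$ is a proper color class. If $f\notin C$, then $f\in S_e$ forces $f$ to be adjacent to $e$, so I must bound the number of $C$-edges meeting $f=xy$. Properness rules out two $C$-edges at a common endpoint, so at most one $C$-edge lies at $x$ and at most one at $y$; and a $C$-edge at $x$ together with a $C$-edge at $y$ would lie at distance at most $1$, since they are joined through $f$, which is again forbidden. Thus at most one $C$-edge touches $f$. In both cases $f$ is charged at most once, so $(2k-1)|C|\le |E(G)|$, which is the stated bound.

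For the consequence, let $t=\chi'_s(G)$, so the color classes partition $E(G)$ into $t$ parts, each of size at most $|E(G)|/(2k-1)$. Summing gives $|E(G)|\le t\,|E(G)|/(2k-1)$, hence $t\ge 2k-1$. If in fact $t=2k-1$, then the $2k-1$ classes, each of size at most $|E(G)|/(2k-1)$, sum to $|E(G)|$, forcing every class to have size \emph{exactly} $|E(G)|/(2k-1)$; but this is not an integer when $(2k-1)\nmid |E(G)|$, a contradiction, so $t\ge 2k$. The only delicate point throughout is the distance-$1$ observation, used once to get $|S_e\cap C|\le1$ and once to rule out $C$-edges at both ends of $f$, so I would take care that the definition of ``distance at most $1$'' is applied correctly at both places; the remainder is bookkeeping.
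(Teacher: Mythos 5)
Your proposal is correct and takes essentially the same approach as the paper: both arguments double-count incidences between edges of a fixed color class and the sets $S_e$ (the paper's $N(e)$), and your claim that each edge $f$ is charged by at most one $e\in C$ is exactly the paper's key fact $|N(f)\cap \C|\le 1$ read through the symmetry $f\in S_e \iff e\in S_f$. The final divisibility step is also the same idea, merely phrased as a contradiction at $t=2k-1$ rather than as a strict inequality on each class size.
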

\begin{proof}
Fix a simple $k$-regular graph $G$ and a strong edge-coloring $\vph$ of $G$.  Let $\C$ be a
set of edges receiving the same color under $\vph$.  For each $e\in E(G)$, let
\Emph{$N(e)$} denote the set of edges sharing at least one endpoint with $e$.  Note
that $e\in N(e)$ and $|N(e)|=2k-1$ for every $e\in E(G)$, since $G$ is
$k$-regular.
Furthermore, $e\in N(e')$ for exactly $2k-1$ edges $e'$ (one of which is $e$), for
each $e\in E(G)$.  Since $\vph$ is a strong edge-coloring, we get
$|N(e)\cap \C|\le 1$ for every $e\in E(G)$.  Thus,
$$(2k-1)|\C| = \sum_{e\in E(G)}|\C \cap N(e)|\le \sum_{e\in E(G)}1 = |E(G)|.$$  
So $(2k-1)|\C|\le |E(G)|$, giving $|\C|\le |E(G)|/(2k-1)$.
If also we have $(2k-1)\nmid\left|E(G)\right|$, then $|\C| < |E(G)|/(2k-1)$.  Since $\C$ is 
arbitrary, we get $\chi'_s(G)>|E(G)|/(|E(G)|/(2k-1))=2k-1$.  That
is, $\chi'_s(G)\ge 2k$.
\end{proof}

\begin{lem}
\label{lem2}
Fix integers $k\ge 2$ and $g\ge 3$ and $n\ge g$.
If also $n\ge \lceil3*(k-1)^{g-1}/(k-2)\rceil$ when $k\ge 3$, then
there exists a simple $k$-regular bipartite graph on $2n$ vertices
with girth at~least~$g$.
\end{lem}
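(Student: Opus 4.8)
The plan is to build the graph greedily, adding one edge at a time while maintaining the invariants that the current graph $H$ is simple and bipartite on the fixed vertex sets $X$ and $Y$ (with $|X|=|Y|=n$), that every degree is at most $k$, and that $H$ has girth at least $g$. The observation that makes the girth condition tractable is that, since $H$ is bipartite, adding a new edge $xy$ with $x\in X$ and $y\in Y$ creates a cycle of length less than $g$ if and only if $H$ already contains an $x$--$y$ path of length at most $g-2$; that is, if and only if $\dist_H(x,y)\le g-2$. (A shortest such path together with $xy$ is a cycle of length $\dist_H(x,y)+1<g$, and conversely no short cycle through $xy$ can arise otherwise, while cycles avoiding $xy$ are unaffected.) So to preserve girth at least $g$ it suffices to join $x$ only to vertices $y$ outside the ball $B(x,g-2)$ of radius $g-2$ about $x$. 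The whole problem thus reduces to showing that, as long as $H$ is not yet $k$-regular, some vertex of deficient degree can legally be joined to a partner of deficient degree lying outside this ball.

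The heart of the matter is a Moore-type count. In a graph of maximum degree at most $k$, the number of vertices within distance $g-2$ of $x$ is at most $1+k+k(k-1)+\cdots+k(k-1)^{g-3}$, which for $k\ge 3$ equals $1+k\frac{(k-1)^{g-2}-1}{k-2}$ and so is a small constant multiple of $\frac{(k-1)^{g-1}}{k-2}$, exactly the quantity in the hypothesis. Since we assume $n\ge 3(k-1)^{g-1}/(k-2)$, the ball $B(x,g-2)$ meets only a bounded fraction of $Y$, leaving a linear-in-$n$ supply of candidate partners even after discarding the current neighbors of $x$ and those vertices of $Y$ that have already reached degree $k$. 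The case $k=2$ needs no counting at all: a single $2n$-cycle is a $2$-regular bipartite graph of girth $2n\ge g$, which is precisely why the extra hypothesis on $n$ is imposed only for $k\ge 3$.

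The one genuinely delicate point, and the step I expect to be the main obstacle, is forcing the process to terminate at an \emph{exactly} $k$-regular graph rather than stalling with a handful of under-saturated vertices that happen to lie inside one another's balls. I would control this by always joining a minimum-degree vertex of $X$ to a minimum-degree vertex of $Y$, so that degrees within each part stay balanced and the set of saturated vertices never outruns the room guaranteed by the count above. For the endgame, when only a few deficient vertices remain and may be mutually close, I would repair by a local swap: delete a suitable existing edge far from the trouble spot and reconnect around it, which is always possible because the forbidden balls occupy only a small fraction of the $2n$ vertices. This is exactly the slack that the factor $3$ in the hypothesis is there to supply, and carrying out this balancing-and-swapping bookkeeping carefully, rather than verifying the girth condition itself, is where the real work lies.
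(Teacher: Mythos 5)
Your outline---greedy insertion of edges whose endpoints are at distance at least $g-1$, a Moore-type ball bound, and an edge swap to escape stalling---is the same Erd\H{o}s--Sachs-style strategy the paper adapts, but your write-up stops exactly where the proof actually lives, and you say so yourself (``where the real work lies''). The swap step is not bookkeeping to be deferred: one must (a) specify the swap precisely, (b) prove that a usable edge to swap on exists, and (c) re-verify the girth afterward, which is not automatic. In the paper these are, respectively: pick deficient $x_\ell\in X$ and $y_\ell\in Y$, find an already-added edge $x_hy_h$ with $\dist(\{x_\ell,y_\ell\},\{x_h,y_h\})\ge g-1$, and replace $x_hy_h$ by the two edges $x_\ell y_h$ and $y_\ell x_h$; existence of $x_hy_h$ is a counting argument that is precisely what consumes the hypothesis $n\ge\lceil 3(k-1)^{g-1}/(k-2)\rceil$; and the girth check requires showing that any cycle of length less than $g$ would have to use both new edges and then, using the distance conditions together with the fact that every $x_h,y_h$-path avoiding the deleted edge has length at least $g-1$, deriving a contradiction. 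None of (a)--(c) appears in your proposal, so as it stands it is a plan, not a proof.

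Beyond the omission, two things you do assert are problematic. First, the claim that after discarding the saturated vertices of $Y$ there remains ``a linear-in-$n$ supply of candidate partners'' is false near the end of the process: when the graph is nearly $k$-regular, the deficient vertices of $Y$ may be few and may all lie inside $B(x,g-2)$---that is exactly the stalling phenomenon, so no step of the argument can lean on that claim. Second, your from-scratch construction makes the degree bookkeeping genuinely harder than necessary, and the min-degree heuristic does not obviously control it, since the distance constraint can forbid joining the two minimum-degree vertices. The paper sidesteps all of this by inducting on $k$: it starts from a $(k-1)$-regular bipartite graph of girth at least $g$ and adds a set $A$ of edges that is automatically a matching, so every vertex has degree $k-1$ or $k$, the deficient sets satisfy $|\Xlow|=|\Ylow|$ for free, and the count $|A|-|\Abad|>0$ works with the stated constant $3$. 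Note also that the Moore bound should be rooted at a \emph{deficient} vertex (degree at most $k-1$), giving $1+(k-1)+\cdots+(k-1)^{g-2}<(k-1)^{g-1}/(k-2)$; your weaker bound $1+k+k(k-1)+\cdots+k(k-1)^{g-3}$ inflates the ball size by a factor of roughly $k/(k-1)$ and would put the constant $3$ in the lemma's hypothesis in jeopardy in any counting you eventually do.
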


Erd\H{o}s and Sachs~\cite{ES,sachs} each proved the existence of regular graphs with
arbitrary degree and arbitrary girth.  We follow the outline of~\cite{ES}
(see~\cite[Theorem~III.$1.4'$]{bollobas_extremal-GT-book}), but
we must adapt the proof to ensure that $G$ is also bipartite.

\begin{proof}
Fix $k$, $g$, and $n$ as in the lemma.
Our proof is by induction on $k$.  The base case, $k=2$, holds by letting 
$G$ be a Hamiltonian cycle on $2n$ vertices.  For the induction step, let
$G$ be a $(k-1)$-regular bipartite graph on $2n$ vertices with girth at least
$g$.  
For each $A\subseteq
E(\overline{G})$, we write \Emph{$G+A$} to denote the graph formed from $G$
by adding each edge in $A$.  We iteratively build an edge set \Emph{$A$}
such that $G+A$ is $k$-regular, bipartite, and has girth at least $g$.  
Since $G$ is bipartite, denote its parts by $X$
and $Y$\aaside{$X$, $Y$}{0mm}.  Given $A$, let 
$\Xlow:=\{x\in X|~d_{G+A}(x)=k-1\}$\aaside{$\Xlow$, $\Xhigh$}{4mm} and 
$\Xhigh:=\{x\in X|~d_{G+A}(x)=k\}$.
Define $\Ylow$ and $\Yhigh$\aaside{$\Ylow$, $\Yhigh$}{4mm} analogously.  Note, for
each $A$, that $\Xlow,\Xhigh$ partition $X$ and $\Ylow,\Yhigh$ partition $Y$. 
Since $G+A$ is bipartite, also $|\Xlow|=|\Ylow|$ and $|\Xhigh|=|\Yhigh|$.  For
all $v,w\in V(G)$, denote by
\Emph{$\dist(v,w)$} the distance in $G+A$ from $v$~to~$w$.
For $W_1\subseteq V(G)$ and $W_2\subseteq V(G)$, let $\dist(W_1,W_2):=\min_{w_1\in
W_1, w_2\in W_2}\dist(w_1,w_2)$.\aside{$\dist(W_1,W_2)$}

Initially, let $A=\emptyset$.  If $|A|<n$, then we will show how to enlarge $A$,
either by adding a single edge, or by removing one edge and adding two.

\begin{figure}[!t]
\centering
\begin{tikzpicture}[thick, scale=.625]
\tikzstyle{uStyle}=[shape = circle, minimum size = 4.5pt, inner sep = 0pt,
outer sep = 0pt, draw, fill=none, semithick]
\tikzset{every node/.style=uStyle}

\draw (0,0) ellipse (1.5in and .25in);
\draw (-1.5,0.95) node[draw=none] {\footnotesize{$\Xlow$}};
\draw (1.5,0.95) node[draw=none] {\footnotesize{$\Xhigh$}};
\draw (0,.25in) -- (0,-.25in);
\draw (-.15in, -.05in) node {} ++ (0,.15in) node[draw=none]
{\footnotesize{$x_\ell$}};
\draw (.95in, -.05in) node (x3) {} ++ (0,.15in) node[draw=none, fill=none] {\footnotesize{$x_h$}};

\draw (-3.5,-1.25) node[draw=none] {\footnotesize{$G+A$}};
\def\low{-2.5cm}
\begin{scope}[yshift=\low]
\draw ellipse (1.5in and .25in);

\draw (-1.5,-.95) node[draw=none] {\footnotesize{$\Ylow$}};
\draw (1.5,-.95) node[draw=none] {\footnotesize{$\Yhigh$}};
\draw (0,.25in) -- (0,-.25in);
\draw (-.15in, .05in) node {} ++ (0,-.15in) node[draw=none]
{\footnotesize{$y_\ell$}};
\draw (.95in, .05in) node (y3) {} ++ (0,-.15in) node[draw=none, fill=none] {\footnotesize{$y_h$}};
\end{scope}
\draw (x3) -- (y3);

\begin{scope}[xshift=4in]

\draw (0,0) ellipse (1.5in and .25in);
\draw (-1.5,0.95) node[draw=none] {\footnotesize{$\Xlow$}};
\draw (1.5,0.95) node[draw=none] {\footnotesize{$\Xhigh$}};
\draw (-.75,.25in) -- (-.75,-.25in);
\draw (-.15in, -.05in) node (x2) {} ++ (0,.15in) node[draw=none]
{\footnotesize{$x_\ell$}};
\draw (.95in, -.05in) node (x3) {} ++ (0,.15in) node[draw=none, fill=none] {\footnotesize{$x_h$}};

\draw (3.5,-1.25) node[draw=none] {\footnotesize{$G+A'$}};
\def\low{-2.5cm}
\begin{scope}[yshift=\low]
\draw ellipse (1.5in and .25in);

\draw (-1.5,-.95) node[draw=none] {\footnotesize{$\Ylow$}};
\draw (1.5,-.95) node[draw=none] {\footnotesize{$\Yhigh$}};
\draw (-.75,.25in) -- (-.75,-.25in);
\draw (-.15in, .05in) node (y2) {} ++ (0,-.15in) node[draw=none]
{\footnotesize{$y_\ell$}};
\draw (.95in, .05in) node (y3) {} ++ (0,-.15in) node[draw=none, fill=none] {\footnotesize{$y_h$}};
\end{scope}
\draw[dashed] (x3) -- (y3);
\draw (x2) -- (y3) (y2) -- (x3);
\end{scope}

\end{tikzpicture}
\caption{When we cannot simply add an edge to $A$, we form $A'$ from $A$ by
removing edge $x_hy_h$ and adding both $x_{\ell}y_h$ and $y_{\ell}x_h$.  (For clarity,
most edges between $X$ and $Y$ are omitted.)\label{fig1}}
\end{figure}
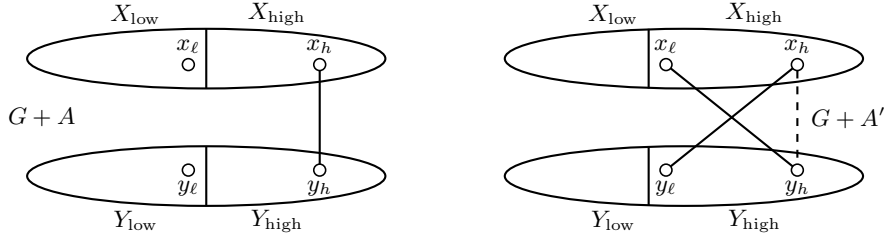

If $\Xlow=\emptyset$, then we are done.  So assume 
both $\Xlow$ and $\Ylow$ are nonempty.
If there exist $x_\ell\in
\Xlow$ and $y_\ell\in \Ylow$\aside{$x_\ell,y_\ell$}
such that $\dist(x_\ell,y_\ell)\ge
g-1$, then we add edge $x_{\ell}y_\ell$ (and are done).  So assume no such
$x_\ell, y_\ell$
exist.  The set of vertices at
distance no more than $g-2$ from any $x_\ell\in \Xlow$ has size at most
$1+(k-1)+(k-1)^2+\cdots+(k-1)^{g-2}<(k-1)^{g-1}/(k-2)$.  This set contains all
of $\Ylow$, 
so we assume $|\Xlow|=|\Ylow|<(k-1)^{g-1}/(k-2)$.
Note that $|\Xlow|<|X|$, 
so $|A|>0$. 
Fix arbitrary $x_\ell\in \Xlow$ and $y_\ell\in \Ylow$. 
We show there exists an edge
$x_hy_h\in A$\aaside{$x_h,y_h$}{-4mm} such that
$\dist(\{x_\ell,y_\ell\},\{x_h,y_h\})\ge g-1$; see Figure~\ref{fig1}.  
Let $\Abad$ denote the set\aaside{$\Abad$}{-4mm}
of edges in $A$ that fail
this criteria; note that $|\Abad|< 2(k-1)^{g-1}/(k-2)$.
Since $|X|\ge \lceil3*(k-1)^{g-1}/(k-2)\rceil$ and
$|\Xlow|<(k-1)^{g-1}/(k-2)$, we have
$|A|-|\Abad|=|\Xhigh|-|\Abad|=|X|-|\Xlow|-|\Abad| >
3*(k-1)^{g-1}/(k-2)-(k-1)^{g-1}/(k-2)-2(k-1)^{g-1}/(k-2)=0$. 
Thus, the desired edge $x_hy_h\in A$ exists.

Form $A'$ from $A$ by removing $x_hy_h$ and adding edges $x_{\ell}y_h$ and
$y_{\ell}x_h$.
Evidently, $|A'|=|A|+1$ and $G+A'$ is bipartite with maximum degree $k$.
Thus, it suffices to check that $G+A'$ has girth at least $g$.  By construction,
each of $x_\ell,y_\ell$ is distance at least $g-1$ from each of $x_h,y_h$ so any cycle
$C$ of length less than $g$ in $G+A'$ must use both of edges $x_{\ell}y_h$ and
$y_{\ell}x_h$.
Since $x_hy_h\in A$ and $G+A$ has girth at least $g$, every $x_h,y_h$-path in
$G+A-x_hy_h$ has length at least $g-1$.  Thus, $C$ contains vertices $x_{\ell},
x_h, y_\ell, y_h$ in that cyclic order.  But this contradicts that $C$ has
length less than $g$, since (by construction) every $x_{\ell},x_h$-path in
$G+A$ has length at least $g-1$.
\end{proof}

Conjecture~\ref{conj2} below slightly weakens
Conjecture~\ref{conj1}(6), and generalizes it to~graphs~with~$\Delta=k$.

\begin{conj}
\label{conj2}
For each integer $k\ge 3$,
there exists a girth $g_k$ such that if $G$ is bipartite with girth at least
$g_k$, with $\Delta(G)=k$, and with $m$ edges, then $\chi'_s(G)\le 2k$ and $G$
has a strong edge-coloring with colors $1,\ldots,2k$ that uses color $2k$ on at
most $m-(2k-1)\lfloor m/(2k-1)\rfloor$ edges.
\end{conj}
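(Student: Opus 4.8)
Since this statement is posed as a conjecture, I can only lay out a line of attack; the whole difficulty lies in the upper bound, since the lower bound is already forced. The plan is to work in $H:=L(G)^2$, the square of the line graph, and to color it with $2k$ colors. In the high-girth regime essentially the only lower bound is the clique bound $\chi'_s(G)=\chi(H)\ge\omega(H)$, and $\omega(H)\le 2k-1$, the extremal clique being the ``book'' $N(e)$ of all edges sharing an endpoint with an edge $e$ both of whose endpoints have degree~$k$ (the non-regular analogue of the obstruction in Lemma~\ref{lem1}). So the content of the conjecture is the upper bound $\chi(H)\le 2k$, just one more than this clique bound, together with nearly balanced color classes; equivalently, I would try to partition $E(G)$ into $2k-1$ induced matchings whose sizes are as equal as the divisibility of $m$ permits, spilling the remainder onto a $(2k)$-th color.

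The reason to expect only $2k$ colors, rather than the $\Theta(k^2)$ forced in general by $\Delta(H)=2k^2-2k$, is that large girth makes $H$ locally a tree of $k$-cliques: each vertex $e=uv$ of $H$ lies in just two maximal cliques, namely the stars of the edges incident to $u$ and to $v$, and beyond these two cliques the neighborhood of $e$ contains no short cycle. I would exploit this with the semi-random (R\"odl nibble) method. In each round every still-uncolored edge proposes a color chosen uniformly from $\{1,\ldots,2k-1\}$ and retains it unless some edge within distance~$1$ retains the same color. High girth makes the resulting bad events local and nearly independent, so the Lov\'asz Local Lemma (or an entropy-compression variant) should certify that, with positive probability, each round colors a $(1-o(1))$ fraction of the remaining edges while every surviving edge keeps $(1-o(1))(2k-1)$ available colors and sees its uncolored neighborhood shrink geometrically.

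After $O(\log m)$ rounds the uncolored edges form a remnant of small maximum degree that is still locally tree-like. I would finish it greedily, calling on color $2k$ only when some vertex of the remnant genuinely exhausts $\{1,\ldots,2k-1\}$, and then run a local recoloring along augmenting paths --- kept short-range by the high girth --- both to push the number of $2k$-colored edges down and to rebalance the classes. The target count for color~$2k$ is the divisibility defect $m-(2k-1)\lfloor m/(2k-1)\rfloor$, which is precisely the quantity that Lemma~\ref{lem1} shows cannot be avoided, so it is the best one could hope for.

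The hard part will be making every step essentially lossless. Nibble arguments naturally concede an $o(1)$ fraction of edges and a few extra colors in the finishing phase, so the real gap is between ``$(1+o(1))(2k-1)$ colors,'' which seems within reach of current methods, and ``exactly $2k$ colors once the girth $g_k$ is large enough''; pinning the usage of color~$2k$ to the divisibility defect is harder still, since it asks for an essentially perfect packing of induced matchings rather than an approximate one. A secondary obstacle is that the hypothesis is only $\Delta(G)=k$ rather than $k$-regularity, so the books $N(e)$ need not all have size $2k-1$ and the balancing must absorb vertices of smaller degree. These are presumably the reasons the statement is recorded here as a conjecture rather than a theorem.
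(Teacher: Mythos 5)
A preliminary point: the paper does not prove this statement. It appears only as Conjecture~\ref{conj2}, an open problem the author poses after disproving Conjecture~\ref{conj1}(6), so there is no proof of record to compare yours against, and you were right to treat it as a conjecture and offer only a program. What can be assessed is whether that program is sound, and there it has a genuine flaw.

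The flaw is the central heuristic: that because large girth makes $H=L(G)^2$ ``locally a tree of cliques,'' its chromatic number should sit near its clique number $\omega(H)\le 2k-1$, so that a nibble plus Local Lemma argument should reach $(1+o(1))(2k-1)$ colors. Local sparsity does not pull chromatic number down to clique number. Graphs with maximum degree $D$ and arbitrarily large girth can have chromatic number $\Theta(D/\log D)$ --- random $D$-regular graphs, after deleting an edge from each of their $O(1)$ short cycles, are the standard example --- and $\Theta(D/\log D)$ is also the best that the semi-random method guarantees. Here $\Delta(H)=\Theta(k^2)$, so the scheme you describe delivers $\Theta(k^2/\log k)$ colors (essentially Mahdian's theorem on $C_4$-free graphs), not anything linear in $k$; the sentence claiming that ``$(1+o(1))(2k-1)$ colors seems within reach of current methods'' is exactly where the proposal breaks. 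Moreover, the obstruction is not merely a limitation of the method. A first-moment computation shows that a random $k$-regular bipartite graph on $n+n$ vertices has, with high probability, no induced matching of size exceeding $O(n\log k/k)$; deleting one edge from each of its $O(1)$ cycles of length less than $g$ raises the girth to at least $g$ while changing this bound, the number of edges $m$, and the maximum degree only by additive constants. A strong edge-coloring of such a graph with $2k$ colors would require some color class --- an induced matching --- of size at least $m/(2k)\approx n/2$, which is impossible once $\log k/k$ is small, roughly $k\ge 9$. So for large $k$ no argument along your lines (or any other) can succeed, and Conjecture~\ref{conj2} itself appears to be false in that range; its viable range is small $k$, with $k=3$ being the case relevant to Conjecture~\ref{conj1}(6), and that is precisely where asymptotic nibble machinery, which needs the degree to tend to infinity, gives no purchase at all.
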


\section*{Acknowledgments}
Thanks to two anonymous referees for helpful comments which improved the
presentation.

\bibliographystyle{habbrv}
{{\bibliography{GraphColoring}}

\end{document}